\begin{document}

\newtheorem{conjecture}{Conjecture}{}
\newtheorem{question}{Question}{}
\newtheorem{theorem}{Theorem}[section]
\newtheorem{proposition}{Proposition}[section]
\newtheorem{lemma}{Lemma}[section]
\newtheorem{corollary}[theorem]{Corollary}
\newtheorem{example}[theorem]{Example}
\newtheorem{remark}{Remark}{}
\newtheorem{definition}{Definition}[section]
\setlength{\parskip}{.15in} 
\setlength{\parindent}{0in} 

\title[\hfil  On the irrationality of mock theta functions]{On the irrationality of Ramanujan's mock theta functions and other q-series at an infinite number of points } 
\author[Angelo B. Mingarelli \hfilneg]{Angelo B. Mingarelli}   
\address{School of Mathematics and Statistics\\ 
Carleton University, Ottawa, Ontario, Canada, K1S\, 5B6\\ and (present address) \\ Departamento de Matem\'aticas, Universidad de Las Palmas de Gran Canaria, 
Campus de Tafira Baja\\ 35017 Las Palmas de Gran Canaria, Spain.}
\email[A. B. Mingarelli]{amingare@math.carleton.ca, amingarelli@dma.ulpgc.es}

\date{{\rm Preprint}\, Dec. 24, 2007}
\thanks{}
\keywords{}

\begin{abstract} We show that all of Ramanujan's mock theta functions of order 3, Watson's three additional mock theta functions of order 3, the Rogers-Ramanujan q-series, and 6 mock theta functions of order 5 take on irrational values at the points $q=\pm 1/2, \pm 1/3, \pm 1/4,\ldots.$
\end{abstract}

\maketitle

\section{Introduction} 

Let $q \in \mathbb{C}$, $|q|< 1$. Ramanujan's mock theta functions of order three are given by 
\begin{eqnarray}
\label{eq01} f(q) & := &1+ \sum_{n=1}^{\infty} \frac{q^{n^2}}{(1+q)^2(1+q^2)^2\cdots(1+q^n)^2}\\
\label{eq02} \phi (q) & := &1+ \sum_{n=1}^{\infty} \frac{q^{n^2}}{(1+q^2)(1+q^4)\cdots(1+q^{2n})},\\
\label{eq03} \psi (q) & := &\sum_{n=1}^{\infty} \frac{q^{n^2}}{(1-q)(1-q^3)\cdots(1-q^{2n-1})},\\
\label{eq04} \chi (q) & := &1+ \sum_{n=1}^{\infty} \frac{q^{n^2}}{(1-q+q^2)(1-q^2+q^4)\cdots(1-q^n +q^{2n})}.
\end{eqnarray}
In 1936 G.N. Watson \cite{wa} presented three more mock theta functions usually denoted by
\begin{eqnarray}
\label{eq05} \omega(q)& := & \sum_{n=0}^{\infty} \frac{q^{2n(n+1)}}{(1-q)^2(1-q^3)^2\cdots (1-q^{2n+1})^2},\\
\label{eq06} \nu (q) & := &\sum_{n=0}^{\infty} \frac{q^{n(n+1)}}{(1+q)(1+q^3)\cdots (1+q^{2n+1})},\\
\label{eq07} \rho (q) & := &\sum_{n=0}^{\infty} \frac{q^{2n(n+1)}}{(1+q+q^2)(1+q^3+q^6)\cdots (1+q^{2n+1}+q^{4n+2})}.
\end{eqnarray}
As a rule such functions are characterized by a power series expansion in $q = \exp{2 i\pi z}$ that converges in the unit disk. As functions of $z$ in the upper half plane, they admit an asymptotic expansion at the cusps, similar to those of modular forms of weight $1/2$, with possible poles at the cusps. They cannot be expressed as linear combinations of ordinary theta functions: this last statement is the most difficult to verify in general. For relationships between mock theta functions and modular forms we refer to recent work by Zweger \cite{z}. The notation is drawn from \cite{gr} and all the q-series represented are Ramanujan mock-theta functions of order three where the fairly standard terminology here is taken from \cite{ga},  \cite{dh} and \cite{wa}.

In this note we show that all of Ramanujan's mock theta functions of order three, $f$, $\phi$, $\psi$ and $\chi$ including Watson's additional contributions via the functions $\nu$, $\rho$ and $\omega$ take on irrational values at $q=\pm 1/2$, $\pm 1/3$, $\pm 1/4$,$\ldots.$ The same techniques are used to show that the Rogers-Ramanujan infinite series converge to irrational quantities at the same set of points. In addition, the following mock theta functions of order five denoted by $f_0$, $f_1$, $F_0$, $F_1$, $\Phi$, $\Psi$ share the same property with regards to irrationality at this set of points;
\begin{eqnarray}
\label{eq08} f_{0}(q) & :=&  \sum_{n=0}^{\infty} \frac{q^{n^2}}{(-q;q)_{n}},\\
\label{eq09} f_{1}(q)  & := & \sum_{n=0}^{\infty} \frac{q^{n(n+1)}}{(-q;q)_{n}}, \\
\label{eq010} F_{0}(q) & := & \sum_{n=0}^{\infty} \frac{q^{2n^2}}{(q;q^2)_{n}}, \\
\label{eq011} F_{1}(q)  & := & \sum_{n=0}^{\infty} \frac{q^{2n(n+1)}}{(q;q^2)_{n+1}}, \\
\label{eq012} \Phi(q)  & := & -1+ \sum_{n=0}^{\infty} \frac{q^{5n^2}}{(q;q^5)_{n+1}\,(q^4;q^5)_{n}}, \\
\label{eq013} \Psi(q)  & := & -1+ \sum_{n=0}^{\infty} \frac{q^{5n^2}}{(q^2;q^5)_{n+1}\,(q^3;q^5)_{n}}.
\end{eqnarray}
The irrationality of series defined by either Ramanujan's mock theta functions of order three or those found by Watson is derived by using results from the theory of Cantor series (described below). We make use of results in each of \cite{ht}, \cite{opp1} and an inversion in the unit disk to deduce this irrationality, although not for all (rational) values of $q$. Using these criteria we show that many of these functions are irrational at $q=\pm 1/2, \pm 1/3, \pm 1/4,\ldots.$ 

Finally we note that our methods fail to produce irrationality results for any other mock theta functions of order five let alone those of higher order. We observe that the proof of the mock theta conjectures for mock theta functions of order five by Hickerson \cite{dh} cannot be used to conclude the irrationality of the remaining ones as, in the end, we are dealing with sums and/or differences of irrational quantities.

\section{Irrationality of Mock Theta Functions of Order 3}

The theory of what are commonly called ``Cantor series" began in 1869 with Georg Cantor's second publication \cite{gc}, a seminal paper that presented a necessary and sufficient condition for series of the form 
\begin{equation}\label{cantor} S = \sum_{n=1}^{\infty} \frac{b_n}{a_1a_2\cdots a_n},\end{equation}
where the $a_i, b_i$ are integers to have irrational sums. The basic conditions being of the form $a_i \geq 2$, $a_i -1 \geq b_i \geq 0$, and for every integer $k > 0$ there is an $n$ such that $k | a_1a_2\cdots a_n$, Cantor showed that $S$ is irrational if and only if the $b_i > 0$ infinitely often and $a_i -1 > b_i$ infinitely often. This result received its first major extension only in 1955 as a result of the work of Oppenheim \cite{opp2},\cite{opp1}. The major development there was Oppenheim's dropping of the divisibility condition on the product of the first $n$ $a$-s and an extension of the theorem to the case where the $b_i$ can have both signs.  The next  development in the theory came recently in a paper by Han\v{c}l and Tijdeman \cite{ht} where the authors give different irrationality criteria which avoid the use of the Cantor-Oppenheim {\it a priori} condition $a_i -1 \geq b_i$.
These results are used to determine the irrationality of those mock theta functions mentioned in the introduction by performing an inversion in the unit disk. We prove the following results.

\begin{theorem} 
\label{th3-1} Ramanujan's mock theta functions of order three, \eqref{eq01}-\eqref{eq04}, take on irrational values at $q=\pm 1/2, \pm 1/3, \pm 1/4, \pm 1/5, \ldots.$ The same is true of Watson's mock theta functions of order three, \eqref{eq05}-\eqref{eq07}.
\end{theorem}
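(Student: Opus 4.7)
The plan is to perform the inversion $q\mapsto 1/p$ with integer $p\geq 2$, rewrite each of the seven q-series as an integer-coefficient Cantor-type series of the form $\sum b_n/(a_1a_2\cdots a_n)$, and then invoke the Oppenheim or Han\v{c}l--Tijdeman criteria cited in the introduction.

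Concretely, substituting $q=1/p$ into \eqref{eq01} and clearing the factor $p^{n(n+1)}$ common to numerator and denominator of the $n$-th summand gives
\begin{equation*}
f(1/p) \;=\; 1 + \sum_{n=1}^{\infty} \frac{p^n}{\prod_{j=1}^{n}(p^j+1)^2},
\end{equation*}
which is a Cantor series with positive integers $a_j=(p^j+1)^2$ and $b_n=p^n$ satisfying $0<b_n<a_n-1$ for every $n\geq 1$. The same algebra applied to \eqref{eq02}--\eqref{eq07} produces analogous Cantor representations: for $\phi$ one obtains $a_j=p^{2j}+1$ and $b_n=p^n$; for $\chi$, $a_j=p^{2j}-p^j+1$ and $b_n=p^n$; for $\psi$ the numerator collapses to $b_n\equiv 1$ with $a_j=p^{2j-1}-1$; and the Watson functions $\omega,\nu,\rho$, whose exponents are $n(n+1)$ or $2n(n+1)$ and whose denominator factors are indexed by $2j+1$, reduce to Cantor series of exactly the same shape (for instance $a_j=p^{2j+1}+1$, $b_n=p^{n+1}$ for $\nu$). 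Replacing $q=1/p$ by $q=-1/p$ introduces the signs $(-1)^{n^2}=(-1)^n$ in the numerator and $(-1)^j$ in the linear denominator factors, producing Cantor series with signed $b_n$; this is precisely the generality that Oppenheim added to Cantor's original theorem.

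Irrationality is then immediate from Oppenheim's theorem once one checks that $b_n\neq 0$ infinitely often and $a_n-1>|b_n|$ infinitely often, both of which are clear since the $a_j$ grow at least geometrically in $j$ while $|b_n|$ grows only like a fixed power of $p^n$. The main obstacle I anticipate is not conceptual but the uniform verification of the hypothesis $a_j\geq 2$, which fails at a handful of small indices when $p=2$: for example $a_1=(p-1)^2=1$ for the $q=-1/2$ expansion of $f$, $a_1=p-1=1$ for $\psi(1/2)$, and an analogous $a_0=1$ for $\omega(1/2)$. I would dispatch each such edge case by peeling off the finite offending prefix (whose contribution is rational) and applying the criterion to the tail, with Han\v{c}l--Tijdeman's refinement, which dispenses with the condition $b_i\leq a_i-1$, providing the flexibility needed to close any remaining gaps. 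Running through the seven functions and both signs of $q$ in turn then yields the theorem.
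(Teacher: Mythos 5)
Your proposal follows essentially the same route as the paper: invert $q\mapsto 1/p$, clear powers of $p$ to obtain integer Cantor series, peel off the finitely many initial factors where $a_j=1$ (exactly the device the paper uses for $f$, $\psi$ and $\omega$), and apply the Oppenheim and Han\v{c}l--Tijdeman criteria. The only caution is that the condition you quote ($b_n\neq 0$ infinitely often and $a_n-1>|b_n|$ infinitely often) is Cantor's, which also requires a divisibility hypothesis unavailable here; what Oppenheim's theorems actually demand --- $|b_n|\le a_n-1$ for \emph{all} $n$, sign changes persisting past every index in the signed case, and $a_{i_n}\to\infty$ with $b_{i_n}/a_{i_n}\to 0$ along a subsequence --- is satisfied by each of your series after the peeling step, so the argument goes through as in the paper.
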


\begin{remark}{\rm  The proof of the irrationality of \eqref{eq01}-\eqref{eq04} above can be found embedded in {\rm [\cite{ma}, Theorem 1]} where, in addition, a measure of irrationality is given. Our proofs are much simpler though, require less machinery and can be used in the case of mock theta functions of order $5$ where nothing about irrationality appears to be known (see below).}
\end{remark}

\noindent{The same idea in the proofs give the following corollaries.}

\begin{theorem} The q-series of Rogers and Ramanujan {\rm [\cite{hw},\S~13.19], [\cite{gr},p.241]},
\begin{equation}\label{rr1} 1 + \sum_{n=1}^\infty \frac{q^{n^2}}{(1-q)(1-q^2)\cdots (1-q^n)},
\end{equation}
and 
\begin{equation}\label{rr2}1 + \sum_{n=1}^\infty \frac{q^{n(n+1)}}{(1-q)(1-q^2)\cdots (1-q^n)},
\end{equation}
converge to irrational quantities whenever $q=\pm 1/2, \pm 1/3, \pm 1/4,\ldots$.
\end{theorem}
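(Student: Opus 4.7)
The plan is to reproduce, for the Rogers--Ramanujan series, the argument used for Theorem~\ref{th3-1}: substitute $q = \pm 1/m$ with integer $m \geq 2$, rewrite each resulting series in the Cantor form \eqref{cantor} with explicitly integer $a_n$ and $b_n$, and then invoke Cantor's theorem or its signed extension due to Oppenheim \cite{opp1}.

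Taking $q = 1/m$ first, the elementary identities $q^{n^2} = m^{-n^2}$, $q^{n(n+1)} = m^{-n(n+1)}$, and $1 - q^k = (m^k-1)/m^k$ yield, after clearing powers of $m$,
\[
\frac{q^{n^2}}{\prod_{k=1}^n(1-q^k)} = \frac{1}{m^{n(n-1)/2}\prod_{k=1}^n(m^k-1)},
\]
and likewise $q^{n(n+1)}/\prod_{k=1}^n(1-q^k) = 1/\bigl(m^{n(n+1)/2}\prod_{k=1}^n(m^k-1)\bigr)$. Setting $b_n \equiv 1$ together with $a_n := m^{n-1}(m^n - 1)$ for \eqref{rr1} and $a_n := m^n(m^n-1)$ for \eqref{rr2} realizes each series as a Cantor series, since in either case $a_1 a_2 \cdots a_n$ reproduces precisely the denominator above. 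The size conditions $a_n \geq 2$ and $a_n - 1 > b_n$ hold for every $n \geq 1$, and the divisibility hypothesis ``for every $k$ there is an $n$ with $k \mid a_1 a_2 \cdots a_n$'' reduces to the observation that every prime divisor of $m$ appears in the $m^{n-1}$ factors, while every prime $p$ coprime to $m$ divides $m^d - 1$ with $d$ the order of $m$ modulo $p$; a standard lifting-the-exponent argument extends this to arbitrary prime powers. Thus Cantor's criterion applies and gives the desired irrationality at $q = 1/m$.

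For $q = -1/m$ the same reduction goes through after replacing $m^k - 1$ by $m^k - (-1)^k$, which is still a positive integer. The series \eqref{rr2} retains the form $b_n = 1$, $a_n = m^n(m^n - (-1)^n)$, since $n(n+1)$ is even, and Cantor's theorem applies verbatim. For \eqref{rr1}, however, $q^{n^2} = (-1)^n m^{-n^2}$ forces $b_n = (-1)^n$, and one invokes Oppenheim's extension \cite{opp1} which permits signed numerators of bounded magnitude. The divisibility verification is identical, with $m^k - (-1)^k$ playing the role of $m^k - 1$.

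The only conceptual difficulty I anticipate is the verification of the Cantor--Oppenheim divisibility hypothesis, which in either guise reduces to standard statements about multiplicative orders of $m$ modulo prime powers; the rest of the argument is a direct bookkeeping of powers of $m$ already essentially carried out above.
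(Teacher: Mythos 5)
Your proposal is correct in substance and follows the same basic strategy as the paper: substitute $q=\pm1/m$, clear denominators, recognize the result as a Cantor series \eqref{cantor}, and apply an irrationality criterion. Two differences deserve comment. First, you absorb all powers of $m$ into the $a_n$ so that $b_n=\pm1$, and you then lean on Cantor's original 1869 theorem, whose divisibility hypothesis you must (and do, in outline) verify via multiplicative orders modulo prime powers. The paper instead groups the terms as $a_n=q^n(q^n-p^n)$, $b_n=p^{n^2}q^n$ with $p=\pm1$ and invokes Oppenheim's criteria (Lemma~\ref{lem2} for \eqref{rr1} with $p=-1$, Lemma~\ref{lem2a} otherwise), which require only that $a_{i_n}\to\infty$ and $b_{i_n}/a_{i_n}\to0$ along a subsequence and dispense with the divisibility condition altogether; this makes your order-mod-$p^j$ digression unnecessary, and even with your grouping you could simply quote Lemma~\ref{lem2a} or Lemma~\ref{lem2} and skip it. Second, there is a small but genuine slip: for \eqref{rr1} at $q=1/m$ your $a_1=m^{0}(m^1-1)=m-1$ equals $1$ when $m=2$, so the asserted bounds $a_n\ge 2$ and $a_n-1>b_n$ fail at $n=1$ and the first digit $b_1/a_1=1$ is not admissible. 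The repair is routine --- peel off the $n=1$ term and treat $\sum_{n\ge2}b_n/(a_2\cdots a_n)$ as the Cantor series, since adding a rational does not affect irrationality, a device the paper itself uses for $\psi$ and $\omega$ (and the paper's own grouping has a comparable $n=1$ edge case at $q=2$, where $b_1=2>a_1-1=1$) --- but you should state the repair rather than assert the conditions hold for every $n\ge1$.
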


\begin{corollary} For $q \in \mathbb{Z}$, $q \geq 2$, the infinite products
\begin{equation*} \prod_{m=0}^\infty \left (1-{1}/{q^{5m+1}} \right ) \left (1-{1}/{q^{5m+4}} \right ),\end{equation*}\begin{equation*} 
\prod_{m=0}^\infty \left (1-{(-1)^{m+1}}/{q^{5m+1}} \right ) \left (1-{(-1)^{m}}/{q^{5m+4}} \right ),
\end{equation*}
each converge to an irrational number. For $q \in \mathbb{Z}$, $q \leq -2$, the infinite products
\begin{equation*}\prod_{m=0}^\infty \left (1-{1}/{q^{5m+2}} \right ) \left (1-{1}/{q^{5m+3}} \right ),\end{equation*}\begin{equation*} 
\prod_{m=0}^\infty \left (1-{(-1)^{m}}/{q^{5m+2}} \right ) \left (1-{(-1)^{m+1}}/{q^{5m+3}} \right ),
\end{equation*}
each converge to an irrational number.
\end{corollary}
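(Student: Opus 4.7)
The plan is to invoke the two classical Rogers--Ramanujan identities
\begin{align*}
1 + \sum_{n=1}^\infty \frac{z^{n^2}}{(1-z)(1-z^2)\cdots(1-z^n)} &= \prod_{m=0}^\infty \frac{1}{(1-z^{5m+1})(1-z^{5m+4})}, \\
1 + \sum_{n=1}^\infty \frac{z^{n(n+1)}}{(1-z)(1-z^2)\cdots(1-z^n)} &= \prod_{m=0}^\infty \frac{1}{(1-z^{5m+2})(1-z^{5m+3})},
\end{align*}
valid throughout the open unit disk, which identify the series \eqref{rr1} and \eqref{rr2} of the previous theorem as the reciprocals of the very infinite products that appear in the corollary. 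The previous theorem will thus deliver the conclusion once the substitution of $z$ is matched correctly in each of the four cases.

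For the two products listed for $q \geq 2$, I would specialise the first identity at $z = 1/q$ and at $z = -1/q$ respectively. In both cases $z$ lies in the set $\{\pm 1/2, \pm 1/3, \ldots\}$, so the left-hand series is irrational by the previous theorem, and hence so is the reciprocal of the product on the right. A short calculation using $(-1)^{5m+1}=(-1)^{m+1}$ and $(-1)^{5m+4}=(-1)^m$ rewrites the $z=-1/q$ case in exactly the signed form $(1-(-1)^{m+1}/q^{5m+1})(1-(-1)^{m}/q^{5m+4})$ displayed in the statement. For the two products listed for $q \leq -2$ the argument is identical but invokes the \emph{second} Rogers--Ramanujan identity with the substitutions $z = 1/q$ and $z = -1/q$; here $q$ itself is negative, so the alternating signs $(-1)^m$, $(-1)^{m+1}$ in the statement arise naturally from $(1/q)^{5m+k}$ and $(-1/q)^{5m+k}$ via the same parity identity.

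The main obstacle is the sign bookkeeping required to line up the four substitutions with the four listed products, and this is routine once one records the identity $(-1)^{5m+k}=(-1)^{m+k}$ and applies it consistently. Ancillary to this one must check that each product converges to a nonzero real number, so that irrationality of the reciprocal transfers to irrationality of the product itself; but this is immediate from $|z| \leq 1/2$, which gives absolute convergence and keeps every factor bounded away from zero.
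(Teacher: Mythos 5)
Your proposal is correct and is essentially the paper's own argument: the paper's proof is a one-line appeal to the Rogers--Ramanujan identities applied to $r_i(\pm 1/q)$, exactly the four substitutions you spell out. Your sign bookkeeping via $(-1)^{5m+k}=(-1)^{m+k}$ and the remark that irrationality of the series transfers to its (nonzero) reciprocal product are just the details the paper leaves implicit.
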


\begin{remark} {\rm A closely related problem to this one involving the Rogers-Ramanujan identities is considered independently in \cite{ma1}. It appears as if none of the methods in either  \cite{opp2}, \cite{ht} or \cite{opp1} can be used to deduce the irrationality of any of these functions at points other than those presented here. Basically, this is because if $p\neq 0, \pm 1$ in the fraction $p/q$, the numerators of the resulting Cantor series grow too rapidly for any of the tests to be of use. Still, as it stands, this technique should also prove  effective in treating further irrationality questions regarding such q-series. }
\end{remark}
\section{Irrationality of Mock Theta Functions of Order 5}
\begin{theorem} 
\label{th5-1}  The mock theta functions of order five \eqref{eq08}-\eqref{eq013} take on irrational values at $q=\pm 1/2, \pm 1/3, \pm 1/4,\ldots.$.
\end{theorem}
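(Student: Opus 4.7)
The plan is to parallel the argument used in the proof of Theorem~\ref{th3-1}: perform the ``inversion in the unit disk'' by setting $q=\pm 1/Q$ with $Q\in\mathbb{Z}$, $Q\ge 2$, rewrite each of \eqref{eq08}--\eqref{eq013} at these points as a Cantor-type series $\sum_{n\ge 1} b_n/(a_1a_2\cdots a_n)$ with integer entries, and then invoke either Oppenheim's irrationality theorem \cite{opp1} (which dispenses with the Cantor divisibility hypothesis and allows signs in $b_n$) or the Han\v{c}l--Tijdeman criterion \cite{ht} (which additionally allows $b_n$ to exceed $a_n-1$). Since each factor $1\pm q^j$ evaluated at $q=\pm 1/Q$ is of the form (positive integer)$/Q^j$, the denominators produced are automatically positive integers and the numerators are integers, so the scheme makes sense uniformly in $Q$ and in sign.

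For $f_0$ and $f_1$, using $(-q;q)_n=\prod_{k=1}^n(1+q^k)$, a direct calculation at $q=1/Q$ yields
\[
f_0(1/Q)=1+\sum_{n\ge 1}\frac{1}{a_1a_2\cdots a_n},\qquad a_n=Q^{n-1}(Q^n+1),
\]
and an entirely analogous formula for $f_1$ with $a_n=Q^n(Q^n+1)$. In both cases $a_n\ge 3$ and $b_n\equiv 1$, so Oppenheim's conditions $a_n-1\ge b_n>0$ for all $n$ and $a_n-1>b_n$ for infinitely many $n$ are trivial. The functions $F_0$ and $F_1$ are treated in the same way via $(q;q^2)_n=\prod_{k=1}^n(1-q^{2k-1})$, after separating off the rational $n=0$ term of $F_1$.

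For $\Phi$ and $\Psi$ the manipulation is more delicate. A short computation shows that the $n$-th term of $\Phi(1/Q)+1$ reduces to
\[
\frac{Q^{5n+1}}{\prod_{k=0}^n(Q^{5k+1}-1)\prod_{k=0}^{n-1}(Q^{5k+4}-1)},
\]
so that the numerators $b_n$ grow like $Q^{5n+1}$ and the Cantor--Oppenheim size restriction $b_n\le a_n-1$ fails. The Han\v{c}l--Tijdeman criterion from \cite{ht} must be used instead; its hypothesis demands only that $a_1\cdots a_n$ grow fast enough relative to $b_n$, which is clear here since $a_1\cdots a_n$ is of size $Q^{5n^2+O(n)}$ while $b_n$ is only of size $Q^{O(n)}$. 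The series for $\Psi$ is treated identically after exchanging the arithmetic progressions $\{5k+1,5k+4\}$ and $\{5k+2,5k+3\}$.

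The cases $q=-1/Q$ require only careful sign tracking: each factor $1\pm(-1)^k/Q^k$ becomes (positive integer)$/Q^k$ for $Q\ge 2$, so the denominators remain integral and positive while the numerators acquire $\pm$ signs that fit the signed Oppenheim and Han\v{c}l--Tijdeman frameworks. The main technical obstacle is the book-keeping for $\Phi$ and $\Psi$: one must explicitly track the growth of both $a_1\cdots a_n$ and $b_n$ and verify the Han\v{c}l--Tijdeman hypotheses for every integer $Q$ with $|Q|\ge 2$, in both the positive and negative cases. Once this is carried out, irrationality follows in all six cases.
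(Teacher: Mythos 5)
Your treatment of $f_0$, $f_1$, $F_0$, $F_1$ coincides with the paper's: invert, absorb the leftover power of $Q$ into the denominator factors so that $a_n=Q^{n-1}(Q^n+1)$ (resp.\ $Q^n(Q^n+1)$, $Q^{2n-1}(Q^{2n-1}\mp 1)$, $Q^{2n-1}(Q^{2n+1}\mp1)$ after splitting off the rational leading term of $F_1$) while $b_n$ stays a small integer, then apply Lemma~\ref{lem2a} or Lemma~\ref{lem2}. Your bookkeeping here is in fact slightly cleaner than the paper's, which literally sets $b_n=q^{-n(n-1)/2}$, a non-integer for $n\ge 2$.

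For $\Phi$ and $\Psi$, however, your premise is mistaken. Your formula for the $n$-th term of $\Phi(1/Q)+1$ is correct, but \emph{two} new denominator factors appear at each step, namely $Q^{5n-1}-1$ and $Q^{5n+1}-1$. To write the sum as $\sum b_n/(a_1\cdots a_n)$ with one new $a$ per term you must group them, $a_n=(Q^{5n-1}\mp1)(Q^{5n+1}\mp1)\sim Q^{10n}$, which is exactly what the paper does; the numerator is then $b_n=\pm Q^{5n}$ (after extracting the rational factor $Q/(Q\mp1)$), and $|b_n|\le a_n-1$ holds comfortably for all integers $Q\ge 2$ and $n\ge 1$. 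So the Cantor--Oppenheim size restriction does \emph{not} fail, and Lemma~\ref{lem2a} (or Lemma~\ref{lem2} in the signed case) applies directly --- no recourse to Han\v{c}l--Tijdeman is needed. If you nonetheless want to use Lemma~\ref{lem1}, its hypothesis is not only the decay of $S_N$: you must also verify $a_n\not|\ b_n$ for every $n$ (true here, since $a_n$ is coprime to $Q$ while $b_n$ is $\pm$ a power of $Q$), and you must fix the grouping of the $2n+1$ denominator factors into $a$'s before $S_N$ is even defined. As written, the $\Phi,\Psi$ portion of your argument rests on a false claim and an incompletely checked criterion; both defects are repairable, and the paper's pairing of factors repairs them in one stroke.
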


\begin{remark} {\rm We are unable to show that the other $6$ mock theta functions of order $5$, namely, $\phi_0(q)$, $\phi_1(q)$, $\psi_0(q)$, $\psi_1(q)$, $\chi_0(q)$, $\chi_1(q)$ are irrational at the specified points. To the best of our knowledge, none of the results in the literature on the irrationality of Cantor series appears to apply to these remaining $6$, thus more sensitive tests will be required to tackle these and the higher order mock theta functions.}
\end{remark}

\section{Proofs}

 In the following proofs, the symbols $m,n,k,N$ will always denote integers. First we need the following lemmas regarding the Cantor series $S$ defined at the outset. We adopt the notation in \cite{ht} for ease of exposition although one must note that the $a$'s and $b$'s are interchanged in \cite{ht},\cite{opp1}. For any $N \geq 1$,
\begin{eqnarray*}
\label{eq1} S_N &:= & \sum_{n=N}^{\infty} \frac{b_n}{a_Na_{N+1}\cdots a_n},
\end{eqnarray*}
where the $a_i,b_i$ are integers. The next lemmas are to be used interchangeably.

\begin{lemma}\label{lem2a}{\rm [\cite{opp1}, Theorem 4]} Let $(a_n), (b_n)$, be two sequences of integers with $a_n \geq 2$, $0 \leq b_n \leq a_n -1.$ If $b_n >0$ infinitely often and if there is a subsequence $i_n$ such that $a_{i_n}\to \infty$ and $b_{i_n}/a_{i_n}\to 0$ as $n \to \infty$, then $S$ as defined in \eqref{cantor} is irrational.
\end{lemma}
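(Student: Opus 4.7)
My plan is a contradiction argument in the spirit of Cantor's original 1869 proof, with the divisibility hypothesis on $a_1 a_2 \cdots a_N$ replaced by the subsequential decay $b_{i_n}/a_{i_n} \to 0$. Assume for contradiction that $S = p/q$ with $p, q$ integers and $q \geq 1$. Writing $P_N := a_1 a_2 \cdots a_{N-1}$ (with the convention $P_1 = 1$), I would first record the identity
\[
P_N \cdot S \;=\; \sum_{n=1}^{N-1} b_n\, a_{n+1}a_{n+2}\cdots a_{N-1} \;+\; S_N,
\]
in which the explicit finite sum is automatically an integer (with the usual empty-product convention). Multiplying through by $q$, I would conclude that $q S_N$ is a non-negative integer for every $N \geq 1$; this is the key structural fact that replaces Cantor's divisibility condition.

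Second, I would establish the size bound $0 \leq S_N \leq 1$ using the telescoping identity
\[
\sum_{n=N}^{\infty} \frac{a_n - 1}{a_N a_{N+1}\cdots a_n} \;=\; 1,
\]
which follows at once from $a_n \geq 2$, combined with the hypothesis $0 \leq b_n \leq a_n - 1$. A one-line refinement, obtained by factoring $1/a_N$ out of the leading term of $S_N$, gives the sharper recursive estimate $S_N \leq (b_N + 1)/a_N$.

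Third, I would pass to the distinguished subsequence. From $a_{i_n} \to \infty$ and $b_{i_n}/a_{i_n} \to 0$, the sharper bound immediately forces $S_{i_n} \to 0$; since $q S_{i_n}$ is a non-negative integer, this means $S_{i_n} = 0$ for all sufficiently large $n$. Because every $b_m \geq 0$, the vanishing of an entire tail $S_{i_n}$ forces $b_m = 0$ for all $m \geq i_n$; letting $n \to \infty$ along the subsequence, this would force $b_m = 0$ for all sufficiently large $m$, directly contradicting the hypothesis that $b_n > 0$ infinitely often.

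The one step that I expect to require real care, rather than routine bookkeeping, is the first: justifying that $q S_N$ is integer-valued without invoking any divisibility condition on $P_N$. This rests on the observation that for each $n < N$ the ratio $P_N/(a_1 \cdots a_n)$ collapses to the integer $a_{n+1}\cdots a_{N-1}$, so the head of the series contributes an integer after multiplication by $P_N$ alone; this is precisely the point where Oppenheim's formulation improves on Cantor's. Once the integrality is in hand, the second and third steps fall out essentially for free.
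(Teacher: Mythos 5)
The paper offers no proof of this lemma: it is quoted verbatim as [Oppenheim, Theorem 4] and used as a black box. Your argument is a correct and complete proof of the cited result, and it is essentially the classical Cantor--Oppenheim contradiction argument: the identity $P_N S = (\text{integer}) + S_N$ with $P_N = a_1\cdots a_{N-1}$ does make $qS_N$ a non-negative integer without any divisibility hypothesis, the telescoping bound $\sum_{n\ge N}(a_n-1)/(a_N\cdots a_n)=1$ together with $0\le b_n\le a_n-1$ gives $S_N\le (b_N+1)/a_N$, and along the distinguished subsequence this forces $S_{i_n}=0$, hence a vanishing tail of non-negative terms and so $b_m=0$ for all large $m$, contradicting the hypothesis. (The only cosmetic remark: a single index $i_{n_0}$ with $S_{i_{n_0}}=0$ already yields the contradiction, so the final passage to the limit along the subsequence is not needed.)
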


\begin{lemma}\label{lem2}{\rm [\cite{opp1}, Theorem 8]} Let $(a_n), (b_n)$, be two sequences of integers with $a_n\geq 2$, $ |b_n| \leq a_n -1.$ Furthermore, let $b_m\,b_n <0$ for some $m > i, n >i$ for any assigned integer $i$. If there is a subsequence $i_n$ such that $a_{i_n}\to \infty$ and $b_{i_n}/a_{i_n}\to 0$ as $n \to \infty$, then $S$ is irrational.
\end{lemma}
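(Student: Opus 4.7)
The plan is to argue by contradiction. Suppose $S=p/q$ with $p\in\mathbb{Z}$ and $q$ a positive integer. The first task is to show that $qS_N\in\mathbb{Z}$ for every $N\ge 1$. Multiplying the identity $S = \sum_{n=1}^{N-1} b_n/(a_1\cdots a_n) + S_N/(a_1\cdots a_{N-1})$ through by $q\cdot a_1\cdots a_{N-1}$, one sees that $q\,a_1\cdots a_{N-1}\cdot S = p\,a_1\cdots a_{N-1}$ and the partial-sum contribution $q\sum_{n=1}^{N-1} b_n\, a_{n+1}\cdots a_{N-1}$ are both integers, whence $qS_N\in\mathbb{Z}$.

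The next step is to establish the \emph{strict} bound $|S_N|<1$ for every $N$. The hypothesis $|b_n|\le a_n-1$ together with the telescoping identity
\[
\sum_{n=N}^{\infty} \frac{a_n-1}{a_N a_{N+1}\cdots a_n} \;=\; \sum_{n=N}^{\infty} \Bigl(\frac{1}{a_N\cdots a_{n-1}}-\frac{1}{a_N\cdots a_n}\Bigr) \;=\; 1
\]
(valid since $a_n\ge 2$ forces $1/(a_N\cdots a_n)\to 0$) yields $\sum_{n\ge N} |b_n|/(a_N\cdots a_n)\le 1$. By the sign-change assumption, for every $N$ there exist $m,n\ge N$ with $b_m b_n<0$; the triangle inequality for $S_N$ is therefore strict, and $|S_N|<1$ follows.

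Now specialize to $N=i_n$ along the given subsequence. From the recursion $S_{i_n}=b_{i_n}/a_{i_n}+S_{i_n+1}/a_{i_n}$ and $|S_{i_n+1}|<1$, one obtains $|S_{i_n}|\le (|b_{i_n}|+1)/a_{i_n}\to 0$, because $b_{i_n}/a_{i_n}\to 0$ and $a_{i_n}\to\infty$ by hypothesis. Hence $qS_{i_n}$ is an integer whose absolute value tends to $0$, so $qS_{i_n}=0$, i.e.\ $S_{i_n}=0$, for all sufficiently large $n$.

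Finally, fix such an $n$ and set $N=i_n$. The recursion $S_N=b_N/a_N+S_{N+1}/a_N$ combined with $S_N=0$ yields $b_N=-S_{N+1}$, an integer of absolute value less than $1$, so $b_N=0$ and $S_{N+1}=0$; iterating gives $b_k=0$ for every $k\ge N$, contradicting the assumption that for each $i$ one can find $m,n>i$ with $b_m b_n<0$. The main obstacle I expect is the \emph{strict} inequality $|S_N|<1$: without strictness the cascade in the last paragraph collapses at the first step, so careful use of the sign-change hypothesis (to rule out equality in the triangle inequality) is indispensable.
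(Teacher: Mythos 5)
Your proof is correct and complete. Note, however, that the paper itself offers no proof of this lemma at all: it is quoted verbatim as Theorem 8 of Oppenheim's 1954 paper \cite{opp1} and used as a black box, so there is nothing internal to compare against. Your argument is a sound self-contained reconstruction in the classical Cantor--Oppenheim style: the reduction $qS_N\in\mathbb{Z}$ via clearing denominators, the telescoping bound $\sum_{n\ge N}(a_n-1)/(a_N\cdots a_n)=1$ combined with the sign-change hypothesis to force the strict inequality $|S_N|<1$ for every $N$, the subsequence hypothesis to drive $|qS_{i_n}|\to 0$ and hence $S_{i_n}=0$ eventually, and the cascade $b_N=-S_{N+1}$, $|S_{N+1}|<1$ forcing $b_k=0$ for all $k\ge N$, which contradicts the assumption that sign changes persist beyond every index. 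You correctly identified the one genuinely delicate point, namely that strictness of $|S_N|<1$ (for $N$ and for all later tails, which is what the cascade needs) must come from the sign-change hypothesis and not merely from $|b_n|\le a_n-1$; your handling of it, observing that a sum containing both a strictly positive and a strictly negative term satisfies a strict triangle inequality, is exactly right.
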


\begin{lemma}\label{lem1}{\rm [\cite{ht}, Proposition 3.1]} Let $(a_n)$, $a_n > 1$, $(b_n)$ be two sequences of integers satisfying $a_n\not|\ b_n$ for all $n$. If 
\begin{equation}
\label{eq2} \liminf_{N\to \infty} |S_N| =0,
\end{equation}
then $S$ is irrational. 
\end{lemma}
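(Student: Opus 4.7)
I would argue by contradiction, following the standard Cantor-series strategy and then exploiting the divisibility hypothesis $a_n\not|\ b_n$ to eliminate the resulting case. Suppose $S=p/q$ is rational with $p\in\mathbb{Z}$ and $q$ a positive integer. The first step is to verify that $q\,S_N$ is an integer for every $N\geq 1$. Writing
\[
q\,a_1 a_2\cdots a_{N-1}\,S \;=\; q\sum_{n=1}^{N-1} b_n\,a_{n+1}\cdots a_{N-1} \;+\; q\,S_N,
\]
the left-hand side equals $p\,a_1\cdots a_{N-1}\in\mathbb{Z}$ and the finite sum on the right is an integer, so $q\,S_N\in\mathbb{Z}$.

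Next, I would use the hypothesis $\liminf_{N\to\infty}|S_N|=0$ to choose $N$ with $|S_N|<1/q$. Then $|q\,S_N|<1$ while $q\,S_N\in\mathbb{Z}$, which forces $q\,S_N=0$ and hence $S_N=0$. The heart of the argument is then to propagate this vanishing forward. Directly from the definition of $S_M$ one has the recurrence
\[
a_M\,S_M \;=\; b_M + S_{M+1},\qquad M\geq 1.
\]
Starting from the integer $S_N=0$, I would show by induction on $k\geq 0$ that $S_{N+k}\in\mathbb{Z}$, and that $S_{N+k}\neq 0$ for $k\geq 1$. The step $S_M\in\mathbb{Z}\Rightarrow S_{M+1}\in\mathbb{Z}$ is immediate from $S_{M+1}=a_M S_M-b_M$; and if one had $S_{M+1}=0$ with $S_M\in\mathbb{Z}$, then $b_M=a_M S_M$ would be a multiple of $a_M$, contradicting $a_M\not|\ b_M$. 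Therefore $|S_M|\geq 1$ for every $M>N$, which contradicts $\liminf_{M\to\infty}|S_M|=0$ and yields the desired contradiction.

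The main obstacle is converting the purely local arithmetic assumption $a_n\not|\ b_n$ into a global lower bound on the tails $|S_M|$ beyond some index. The recurrence $a_M S_M=b_M+S_{M+1}$ is what makes this possible: it transmits integrality of one tail to the next, while the divisibility hypothesis prevents any subsequent tail from vanishing. Once that iterative mechanism is in place, the clash with $\liminf|S_N|=0$ is immediate, and no further analytic machinery is needed.
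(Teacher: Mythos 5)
Your proof is correct and complete. Note that the paper offers no proof of this lemma at all: it is imported verbatim from Han\v{c}l--Tijdeman as [\cite{ht}, Proposition 3.1], so there is no internal argument to compare against; your write-up is in effect the standard (and essentially the cited) proof. The chain is sound: from $S=p/q$ one gets $q\,S_N\in\mathbb{Z}$ for every $N$, the hypothesis $\liminf_N|S_N|=0$ produces an $N$ with $q\,S_N=0$, and the recurrence $a_M S_M=b_M+S_{M+1}$ together with $a_M\not|\ b_M$ propagates integrality while forbidding any later tail from vanishing, so $|S_M|\geq 1$ for all $M>N$, contradicting the liminf hypothesis. One small point worth stating explicitly: in the induction step the case $S_M=0$ is covered because every integer divides $0$, so $S_{M+1}=0$ would force $b_M=a_M S_M$ and hence $a_M\mid b_M$ even there; your phrasing already handles this correctly. (The hypothesis $a_n>1$ is not actually needed beyond what $a_n\not|\ b_n$ already forces, namely $|a_n|\geq 2$.)
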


\begin{proof} (Theorem~\ref{th3-1})
\noindent{1)} Let $p, q \in \mathbb{C}$, $q\neq 0$. Then 
\begin{eqnarray}
f(p/q) & = & 1+\sum_{n=1}^{\infty} \frac{p^{n^2}\,q^{2\cdot(1+2+\ldots n)}}{q^{n^2}(p+q)^2(p^2+q^2)^2\cdots (p^n+q^n)^2},\nonumber \\
&=& 1+\sum_{n=1}^{\infty} \frac{p^{n^2}\,q^n}{(p+q)^2(p^2+q^2)^2\cdots (p^n+q^n)^2},\nonumber \\
& = & 1 + \frac{pq}{(p+q)^2}+\frac{pq}{(p+q)^2}\left \{\sum_{n=1}^{\infty} \frac{p^{n(n+2)}\,q^n}{(p^2+q^2)^2\ldots (p^{n+1}+q^{n+1})^2}\right \},\label{yo} \\
& := & 1 + \frac{pq}{(p+q)^2}+\frac{pq}{(p+q)^2} g(p,q) \nonumber
\end{eqnarray}
where $g$ is the function defined by the series on the right in \eqref{yo}. Observe that if $p, q \in \mathbb{Z}$ then, whenever it is defined, $f(p/q) \not\in \mathbb{Q}$ if and only if $g(p,q) \not\in \mathbb{Q}$. 

Consider the quantity $g(\pm 1, q)$ where $q \geq 2$ is an integer. Then (corresponding signs are to be used throughout)
$$
g(\pm 1, q)  = \sum_{n=1}^{\infty} \frac{(-1)^{n^2}\,q^n}{(q^2+1)^2(q^3\pm 1)^2(q^4+1)^2 \ldots (q^{n+1} \pm 1)^2}
$$
is of the form \eqref{cantor} with $b_n =(-1)^{n^2}\,q^n$ , $a_n = (q^{n+1}\pm 1)^2$, for $n \geq 1$. We verify the conditions of Lemma~\ref{lem2}: Since $q\geq 2$ is an integer, it is easy to see that $a_n > 2$, for all $n$ (in fact $a_n \geq 9$). Next the $b_n$ alternate in sign infinitely often so that the sign condition there is verified as well. In addition, a moment's notice shows that the  limiting behavior of the $a_n$ and the ratio $b_n/a_n$ is as required. The final condition, i.e., $a_n -|b_n| - 1\geq 0$ is equivalent to showing that $q^{2n+2}\pm 2q^{n+1} -q^n \geq 0$ for any $q \geq 2$ and $n \geq 1$. Since $q^{n+2} \geq 2q+1$ for any $n\geq 1$ and given $q \geq 2$ this last condition is also satisfied. The Lemma therefore gives us the irrationality of the quantity $g(\pm 1, q)$ for any integer $q \geq 2$, and this completes the proof.

Similar proofs apply to the remaining cases so we need only sketch the details.

\noindent{2)} The proof is similar to that for $f(q)$. Given $p$, $q\neq 0$,
$$\phi (p/q) = 1+\sum_{n=1}^{\infty} \frac{p^{n^2}\,q^n}{(p^2+q^2)(p^4+q^4)\cdots(p^{2n}+q^{2n})},$$
so that by setting $p=\pm 1$ we get, 
$$\phi (\pm 1/q) = 1+\sum_{n=1}^{\infty} \frac{(-1)^{n^2}\,q^n}{(q^2+1)(q^4+1)\cdots(q^{2n}+1)},$$
Note that this is a Cantor series \eqref{cantor} with $a_n=q^{2n}+1$ and $b_n=(-1)^{n^2}q^{n}$, $n \geq 1$. All the conditions of Lemma~\ref{lem2} being satisfied we conclude that $\phi (\pm 1/q)$ is irrational for all integers $q \geq 2$.

\noindent{3)} For given $p, q$ for which the series is defined,
$$\psi (p/q) = \sum_{n=1}^{\infty} \frac{p^{n^2}}{(q-p)(q^3-p^3)\cdots(q^{2n-1}-p^{2n-1})}.$$ From this we find that
$$\psi (\pm 1/q) = \sum_{n=1}^{\infty} \frac{(\pm 1)^{n^2}}{(q\mp 1)(q^3\mp 1)\cdots(q^{2n-1}\mp 1)}$$ 
holds for any integer $q\geq 2$. We rewrite the preceding series in the form
$$\psi (\pm 1/q) = \frac{{\pm 1}}{q\mp 1}\ \,\left \{ 1+  \sum_{n=2}^{\infty} \frac{(\pm 1)^{n^2-1}}{(q^3\mp 1)(q^5\mp 1) \cdots(q^{2n-1}\mp 1)}\right \} $$ and view the series on the right above as a Cantor series with $b_n=(\pm 1)^{n^2-1}$, $a_n = q^{2n-1}\mp 1,$ for $n \geq 2$. Once again it is not difficult to see that the conditions of Lemma~\ref{lem2} are satisfied so that the series on the right of the last display converges to an irrational number and thus, so does $\psi (\pm 1/q)$.

\noindent{4)} In this case we have $$\chi (p/q) = 1+ \sum_{n=1}^{\infty} \frac{p^{n^2}}{(q^2-pq+p^2)(q^4-p^2q^2+p^4)\cdots (q^{2n}-p^nq^n+p^{2n})},$$  for any $p, q$ for which the series converges. Setting $p=\pm 1$, $q \geq 2$ an integer, we find the form
$$\chi (\pm 1/q) = 1+ \sum_{n=1}^{\infty} \frac{(-1)^{n^2}}{(q^2\mp q+1)(q^4-q^2+1)\cdots (q^{2n} - (\pm 1)^n q^n+1)}.$$ Note that $a_n = q^{2n} - (\pm 1)^n q^n+1 \geq 2$ for every $n \geq 1$ and $q \geq 2$ an integer. As before we set $b_n = (\pm 1)^{n^2}$, for $n \geq 1$. The remaining conditions of the Lemma are readily verified.

\noindent{5)} As before, for $p, q$ for which the series is defined, a simple change of variable gives the form
$$\omega (p/q) =\sum_{n=1}^{\infty} \frac{p^{2n(n-1)}q^{2n}}{(q-p)^2(q^3-p^3)^2\cdots (q^{2n-1}-p^{2n-1})^2}.$$
Hence, for $q\geq 2$ an integer, we find
\begin{eqnarray*}
\omega (\pm 1/q) &=& \sum_{n=1}^{\infty} \frac{q^{2n}}{(q\mp 1)^2(q^3\mp 1)^2\cdots (q^{2n-1}\mp 1)^2}\\
& = & \frac{q^2}{(q\mp 1)^2} \, \left \{ 1 +   \sum_{n=1}^{\infty} \frac{q^{2n}}{(q^3\mp 1)^2\cdots (q^{2n+1}\mp 1)^2}             \right \}.
\end{eqnarray*}
It follows that $\omega (\pm 1/q) $ is irrational if and only if the series on the right of the above display is irrational. But a simple substitution as is by now common, namely, $a_n=(q^{2n+1}\mp 1)^2$, $b_n=q^{2n}$ shows that for $q\geq 2$ and every $n\geq 1$ all the conditions of Lemma~\ref{lem2a} are satisfied. There follows the irrationality of said series as well as the quantity $\omega (\pm 1/q) $ for $q \geq 2$ and this completes the proof.
\end{proof}

\noindent{ 6) and 7)} Arguments similar to the ones presented in the proofs above apply to the case of the remaining two Watson mock theta functions $\nu (q)$ and $\rho (q)$ to show irrationality for $q=\pm 1/2,\pm 1/3,\pm 1/4,\ldots$ (so we leave the proofs to the reader).

\begin{remark}{\rm Alternate proofs of these results can also be obtained using Lemma~\ref{lem1} (or [\cite{ht}, Proposition~3.1]). For example, we show how (1) can be so proved: that is e.g., $f(1/q)$ is irrational for all integers $ q \geq 2$. Now
$$f(1/q) = 1+\sum_{n=1}^{\infty} \frac{q^n}{(1+q)^2(1+q^2)^2\cdots(1+q^n)^2}.$$
Define $a_n=(1+q^n)^2$ for $n \geq 1$, $b_n=q^{n}$, for all $n \geq 1$. Fix $q \in \mathbb{Z^+}$, $q \geq 2$. Then $a_n > 1$, for all $n \geq 1$. Observe that since $q^n | q^{n^2}$ for $n \geq 1$ then $a_n\not|\ b_n$ for such $n$. In addition, for any $N \geq 1$
$$\prod_{k=N}^n{(1+q^k)^2} > q^{2N+2(N+1)+\ldots+2n} = q^{n^2+n -(N^2-N)}.$$
Since $a_Na_{N+1}\cdots a_n \geq q^{n^2+n -(N^2-N)}$, we have
\begin{eqnarray*}
S_N & =&\sum_{n=N}^{\infty} \frac{b_n}{a_Na_{N+1}\cdots a_n}\\
& \leq & \sum_{n=N}^{\infty} q^n/q^{n^2+n -(N^2-N)} =  \sum_{n=N}^{\infty} 1/q^{n^2 -(N^2-N)} \\
& \leq & \frac{1}{q^N}\, \sum_{m=0}^{\infty} 1/q^{m(m+2N)} \leq \frac{1}{q^N}\, \sum_{m=0}^{\infty} 1/q^{m^2}\\
&=& o(1), \,{\rm as\,\,} N \to \infty,
\end{eqnarray*}
since $q \geq 2$. The result follows on account of Lemma~\ref{lem1}.

}
\end{remark}

\begin{proof} (Theorem 1.3)
The proofs are all too familiar by now. As usual replace $q$ by $p/q$ in \eqref{rr1} and rearrange the terms in the form of a Cantor series. Call the left-side of \eqref{rr1}, $r_1(q)$. Then for appropriate $p,q$, $r_1(p/q)$ may be written in the form
\begin{equation*}
r_1(p/q)  =  1+ \sum_{n=1}^\infty \frac{p^{n^2}\,q^n}{[q(q-p)][q^2(q^2-p^2)]\cdots [q^n(q^n-p^n)]}.
\end{equation*}
\end{proof}
Now set $p=\pm 1$, $q\in \mathbb{Z}$, $q \geq 2$. Note that the coefficients defined by $a_n = q^n(q^n - (-1)^n)$, $b_n = (-1)^{n^2}\,q^n$ satisfy all the conditions of Lemma~\ref{lem2}, thus the resulting Cantor series sums to an irrational number.

For the next one, write $r_2(q)$ for the left side of \eqref{rr2}. Then $r_2(p/q)$ can be written in the form
$$r_2(p/q)  =  1+ \sum_{n=1}^\infty \frac{p^{n(n+1)}}{[q(q-p)][q^2(q^2-p^2)]\cdots [q^n(q^n-p^n)]}.$$
Substitute $p=\pm 1$. Then the previous series becomes simply
$$r_2(p/q)  =  1+ \sum_{n=1}^\infty \frac{1}{[q(q\mp 1)][q^2(q^2-1)]\cdots [q^n(q^n-(-1)^n)]}.$$
This time we use $a_n = q^n(q^n - (-1)^n)$, $b_n = 1$ and Lemma~\ref{lem2a}. The conclusion follows.
\begin{proof}(Corollary 1.4)  This follows by the Rogers-Ramanujan identities applied to $r_i(\pm 1/q)$, cf., [\cite{hw},Theorem 362], [\cite{gr},p.241].
\end{proof}

\begin{proof} (Theorem~\ref{th5-1})
\vskip0.15in
\noindent{1)} Expanding the Pochhammer symbol observe that $$f_{0}(q) = 1 +\sum_{n=1}^{\infty} \frac{q^{n^2}}{(1+q)(1+q^2)\cdots (1+q^n)},$$ so that the inversion $q\to 1/q$ gives the form $$f_{0}(1/q) = 1+\sum_{n=1}^{\infty} \frac{q^{- n(n-1)/2}}{(1+q)(1+q^2)\cdots (1+q^n)}$$
after simplification. Set $b_n = q^{- n(n-1)/2}$, $a_n = 1+q^n$ so that the series is of the form \eqref{cantor}. For $q \in \mathbb{Z^+}$, $q \geq 2$, note that for every $n \geq 1$, $a_n > 2$, $a_n \geq b_n+1$, $a_n\to \infty$ and $b_n/a_n \to 0$ as $n \to \infty$. The stated irrationality now follows by Lemma~\ref{lem2a}.

The case $q\in \mathbb{Z}$, $q \leq -2$ may be treated similarly by using the device whereby for $q > 0$ we can rewrite the series in the form 
\begin{eqnarray*}
f_{0}(-1/q) &=& 1+\sum_{n=1}^{\infty} \frac{(-1)^{n^2}\, q^{- n(n-1)/2}}{(q-1)(q^2+1)\cdots (q^n+(-1)^n)}\\
&=& 1-\frac{1}{q-1} + \sum_{n=2}^{\infty} \frac{(-1)^{n^2}\, q^{- n(n-1)/2}}{(q^2+1)\cdots (q^n+(-1)^n)}\\
& = & 1-\frac{1}{q-1} + \sum_{n=1}^{\infty} \frac{(-1)^{n+1}\, q^{- n(n+1)/2}}{(q^2+1)\cdots (q^{n+1}+(-1)^{n+1})}
\end{eqnarray*}
where now the series on the right is a Cantor series with $b_n=(-1)^{n+1}\, q^{- n(n+1)/2}$, $a_n=q^{n+1}+(-1)^{n+1}$ and these terms satisfy the conditions of Lemma~\ref{lem2}, for $q\geq 2$ an integer. Since $q\geq 2$ is an integer and the series is irrational, the result follows. 

\noindent{2)} As in (1) we expand the symbol and perform the inversion $q\to 1/q$ as before to find the form
$$f_{1}(1/q) = 1+\sum_{n=1}^{\infty} \frac{q^{- n(n+1)/2}}{(1+q)(1+q^2)\cdots (1+q^n)}$$
after simplification. The proof now follows the lines of the previous one with minor changes and so is omitted. 

\noindent{3)} By definition, 
\begin{eqnarray*}
F_{0}(q) &=& 1+\sum_{n=1}^{\infty} \frac{q^{2n^2}}{(1-q)(1-q^3)\cdots (1-q^{2n-1})}
\end{eqnarray*}
so, for $p, q \in \mathbb{C}$, $q \neq 0$ we have, after some simplification,
\begin{eqnarray*}
F_{0}(p/q) &=& 1+\sum_{n=1}^{\infty} \frac{p^{2n^2}}{q^{n^2}(q-p)(q^3-p^3)\cdots (q^{2n-1}-p^{2n-1})}
\end{eqnarray*}
Inserting $p=\pm 1$ in the preceding expression gives
\begin{eqnarray}\label{eqf0q}
F_{0}(\pm 1 /q) &=& 1+\sum_{n=1}^{\infty} \frac{1}{q^{n^2}(q\mp 1)(q^3\mp 1)\cdots (q^{2n-1}\mp 1)}.
\end{eqnarray}
Note that for every $q \geq 2$, $q \in \mathbb{Z}$ the right side of \eqref{eqf0q} is a Cantor series with the identifications $a_n=q^{2n-1}(q^{2n-1}\mp 1)$ and $b_n=1$ for all $n$. In any case, for every $q \geq 2$ an integer and any $n \geq 1$, $a_n \geq 2$ and $a_n\to \infty$. In addition, $b_n/a_n \to 0$ as $n \to \infty$, and so by Lemma~\ref{lem2a} $F_0(\pm 1/q)$ is irrational for every integer $q\geq 2$.

\noindent{4)} In this case, 
\begin{eqnarray*}
F_{1}(q) &=& \sum_{n=0}^{\infty} \frac{q^{2n^2+2n}}{(1-q)(1-q^3)\cdots (1-q^{2n+1})}.
\end{eqnarray*}
As before, for $p, q \in \mathbb{C}$, $q \neq 0$ we get
\begin{eqnarray*}
F_{1}(p/q) &=&\frac{q}{q-p}+ \sum_{n=1}^{\infty} \frac{p^{2n^2+2n}}{q^{n^2-1}(q-p)(q^3-p^3)\cdots (q^{2n+1}-p^{2n+1})}.
\end{eqnarray*}
The substitution $p=\pm 1$ now produces
\begin{eqnarray}\label{eqf1q}
F_{1}(\pm 1 /q) &=& \frac{q}{q\mp 1}+ \frac{1}{q\mp 1}\, \sum_{n=1}^{\infty} \frac{1}{q^{n^2-1}(q^3\mp 1)\cdots (q^{2n+1}\mp 1)} \\
& = & \frac{q}{q\mp 1}+ \frac{1}{q\mp 1}\, \sum_{n=1}^{\infty} \frac{q}{q^{1+ 2+ \cdots + (2n-1)}(q^3\mp 1)\cdots (q^{2n+1}\mp 1)}.\label{eqf1q1}
\end{eqnarray}
Note that since $q\geq 2$ is an integer, $F_{1}(\pm 1 /q)$ is irrational if and only if the series on the right of \eqref{eqf1q} is irrational. But, observe that for such $q$ the right side of \eqref{eqf1q1} is a Cantor series with the identifications $a_n=q^{2n-1}(q^{2n+1}\mp 1)$ and $b_n=q$ for all $n$. Also, for every $q \geq 2$ an integer and any $n \geq 1$, $a_n \geq 2$ and $a_n\to \infty$. In addition, $b_n/a_n \to 0$ as $n \to \infty$ therefore, by Lemma~\ref{lem2a}, $F_1(\pm 1/q)$ is also irrational for every integer $q\geq 2$.

\noindent{5)} Expanding the Pochhammer symbol we find
\begin{eqnarray*}
\Phi (q) &=& -1 + \sum_{n=0}^{\infty} \frac{q^{5n^2}}{(1-q)(1-q^4)(1-q^6)(1-q^9)\cdots (1-q^{5n-1})(1-q^{5n+1})}\\
& = &  -1 +\frac{1}{1-q}+ \frac{1}{1-q}\,\sum_{n=1}^{\infty} \frac{q^{5n^2}}{(1-q^4)(1-q^6)\cdots (1-q^{5n-1})(1-q^{5n+1})}.
\end{eqnarray*}
If $p, q \in \mathbb{C}$, $q \neq 0$ we see that
\begin{eqnarray}
\Phi (p/q) &=&-1 + \frac{q}{q-p}+ \frac{q}{q-p}\, \sum_{n=1}^{\infty} \frac{p^{5n^2}\,q^{4+6+9+11+\cdots+(5n-1)+(5n+1)}}{q^{5n^2}(q^4-p^4)(q^6-p^6)\cdots (q^{5n+1}-p^{5n+1})}\nonumber \\
&=& \ldots + \frac{q}{q-p}\,\sum_{n=1}^{\infty} \frac{p^{5n^2}\,q^{5n^2+5n}}{q^{5n^2}(q^4-p^4)(q^6-p^6)\cdots (q^{5n+1}-p^{5n+1})}\nonumber \\
&=& \ldots + \frac{q}{q-p}\,\sum_{n=1}^{\infty} \frac{p^{5n^2}\,q^{5n}}{(q^4-p^4)(q^6-p^6)\cdots (q^{5n+1}-p^{5n+1})}.\label{eqphi}
\end{eqnarray}
It clearly suffices to show that the series \eqref{eqphi} in question is irrational for $p=\pm 1$, $q\geq 2$ an integer. In other words, since $(-1)^{5n^2}=(-1)^n$ for every $n$, it suffices to show that 
\begin{eqnarray*}
\Phi (\pm 1/q) =... +\frac{q}{q\mp 1}\,\sum_{n=1}^{\infty} \frac{(-1)^n\,q^{5n}}{(q^4-1)(q^6-1)(q^9\mp 1)(q^{11}\mp 1)\cdots (q^{5n+1}-(\pm 1)^{5n+1})}
\end{eqnarray*}
is irrational (since the omitted expression is a rational number). However, the preceding series is a Cantor series as can be ascertained by defining $$a_n=(q^{5n-1} - (\pm 1)^{5n-1})(q^{5n+1}-(\pm 1)^{5n-1})$$ and $b_n=(-1)^n\,q^{5n}$ for all $n\geq 1$. Indeed, for every integer $q \geq 2$ and any $n \geq 1$, $a_n \geq 2$ and $a_n\to \infty$. In addition, $|b_n| \leq a_n -1$ for all $n \geq 1$, $q \geq 2$ and furthermore, $b_n/a_n \to 0$ as $n \to \infty$. Therefore, by Lemma~\ref{lem2a} (resp. Lemma~\ref{lem2}), $\Phi (\pm 1/q)$ is irrational for every integer $q\geq 2$.

\noindent{6)} This case is similar to the preceding one, so we need only sketch the details. Expanding the Pochhammer symbol we find
\begin{eqnarray*}
\Psi (q) &=& -1 + \sum_{n=0}^{\infty} \frac{q^{5n^2}}{(1-q^2)(1-q^3)(1-q^7)(1-q^8)\cdots (1-q^{5n-2})(1-q^{5n+2})}\\
& = & -1 +\frac{1}{1-q^2}+ \frac{1}{1-q^2}\,\sum_{n=1}^{\infty} \frac{q^{5n^2}}{(1-q^3)(1-q^7)\cdots (1-q^{5n-2})(1-q^{5n+2})}.
\end{eqnarray*}
Whenever $p, q \in \mathbb{C}$, $q \neq 0$,
\begin{eqnarray}
\Psi (p/q) =-1 + \frac{q^2}{q^2-p^2}+ \frac{q^2}{q^2-p^2}\, \sum_{n=1}^{\infty} \frac{p^{5n^2}\,q^{5n}}{(q^3-p^3)(q^7-p^7)\cdots (q^{5n+2}-p^{5n+2})}.
\label{eqpsi}
\end{eqnarray}
As before we note that 
\begin{eqnarray*}
\Psi (\pm 1/q) =... + \frac{q^2}{q^2-1}\,\sum_{n=1}^{\infty} \frac{(\pm 1)^n\,q^{5n}}{(q^3\mp 1)(q^7\mp 1)\cdots (q^{5n+2}-(\pm 1)^{5n+2})},
\end{eqnarray*}
where the omitted expression is a rational number for $q\geq 2$, an integer. Defining $$a_n=(q^{5n-2} - (\pm 1)^{5n-2})(q^{5n+2}-(\pm 1)^{5n+2})$$ and $b_n=(\pm 1)^n\,q^{5n}$ for all $n\geq 1$, we see that the previous series is a Cantor series. For every integer $q \geq 2$ and any $n \geq 1$, $a_n \geq 2$ once again, and $a_n\to \infty$. In addition, $|b_n| \leq a_n -1$ for all $n \geq 1$, $q \geq 2$ and $b_n/a_n \to 0$ as $n \to \infty$. Therefore, by Lemma~\ref{lem2a} (resp. Lemma~\ref{lem2}), $\Psi (\pm 1/q)$ is irrational for every integer $q\geq 2$.

\section{Acknowledgments}

The author is grateful to Matala-Aho for a fruitful correspondence and for reprints and preprints of his papers. My thanks also to Kathrin Bringmann for helpful comments and suggestions.

\end{proof}

\end{document}